\theoremstyle{theorem}
\newtheorem{theorem}{Theorem}
\theoremstyle{lemma}
\newtheorem{lemma}{Lemma}
\theoremstyle{definition}
\newtheorem{definition}{Definition}
\theoremstyle{assumption}
\newtheorem{assumption}{Assumption}
\theoremstyle{problem}
\newtheorem{problem}{Problem}
\theoremstyle{example}
\theoremstyle{proposition}
\newtheorem{proposition}{Proposition}
\theoremstyle{corollary}
\newtheorem{corollary}{Corollary}
\newtheorem{remark}{Remark}
\newcommand{\sect}[1]{Section~\ref{#1}}
\newcommand{\eq}[1]{Equation~(\ref{#1})}
\newcommand{\eqs}[2]{Equations~(\ref{#1})-(\ref{#2})}
\newcommand{\ass}[1]{Assumption~\ref{#1}}
\newcommand{\prop}[1]{Proposition~\ref{#1}}
\newcommand{\lem}[1]{Lemma~\ref{#1}}
\newcommand{\rem}[1]{Remark~\ref{#1}}
\newcommand{\prob}[1]{Problem~\ref{#1}}
\newcommand{\theo}[1]{Theorem~\ref{#1}}
\newcommand{\defi}[1]{Definition~\ref{#1}}
\newcommand{\fbm}[1]{\mathbf{#1}}
\newcommand{\dbm}[1]{\dot{\fbm{#1}}}
\newcommand{\ddbm}[1]{\ddot{\fbm{#1}}}
\newcommand{\tbm}[1]{\fbm{#1}^\mathsf{T}}
\newcommand{\tfbm}[1]{\bm{#1}^\mathsf{T}}
\newcommand{\tilbm}[1]{\tilde{\fbm{#1}}}
\newcommand{\ttilbm}[1]{\tilde{\fbm{#1}}^\mathsf{T}}
\newcommand{\dtbm}[1]{\dot{\fbm{#1}}^\mathsf{T}}
\newcommand{\dfbm}[1]{\dot{\bm{#1}}}
\newcommand{\bbm}[1]{\overline{\fbm{#1}}}
\newcommand{\tbbm}[1]{\overline{\fbm{#1}}^\mathsf{T}}
\begin{document}

\title{Connectivity-Preserving Swarm Teleoperation With A Tree Network}

\author{Yuan~Yang, Daniela Constantinescu and Yang Shi\thanks{The authors are with the Department of Mechanical Engineering, University of Victoria, Victoria, BC V8W 2Y2 Canada (e-mail: yangyuan@uvic.ca; yshi@uvic.ca; danielac@uvic.ca).}}

\maketitle

\begin{abstract}
During swarm teleoperation, the human operator may threaten the distance-dependent inter-robot communications and, with them, the connectivity of the slave swarm. To prevent the harmful component of the human command from disconnecting the swarm network, this paper develops a constructive strategy to dynamically modulate the interconnections of, and the locally injected damping at, all slave robots. By Lyapunov-based set invariance analysis, the explicit law for updating that control gains has been rigorously proven to synchronize the slave swarm while preserving all interaction links in the tree network. By properly limiting the impact of the user command rather than rejecting it entirely, the proposed control law enables the human operator to guide the motion of the slave swarm to the extent to which it does not endanger the connectivity of the swarm network. 
\end{abstract}

\IEEEpeerreviewmaketitle

\section{Introduction}\label{sec:introduction}
Fully autonomous multi-robot systems~(MRS-s) have been extensively studied because they are more robust and flexible than single-robot systems~\cite{Schwager2011Proceedings, Kumar2011IJRR}. Compared with autonomous MRS-s, semi-autonomous teleoperated swarms are partially controlled by human operators and, therefore, better suited for dealing with complex problems in unpredictable environments~\cite{Paolo2012RAM}. A first swarm bilateral teleoperation controller has connected the master and slave sides through velocity-like variables in~\cite{Paolo2011RSS, Paolo2012TRO}, and has steared bearing formations in~\cite{Paolo2012IJRR}. A less conservative swarm teleoperator has decoupled the master and slave sides through virtual kinematic points in~\cite{Lee2013TMECH}. Maneuverability and perceptual sensitivity for three types of haptic cues has been studied in~\cite{Paolo2013TMECH}. Time-varying density functions have improved the agility of human-swarm adaptive interaction for optimal coverage control in~\cite{Egerstedt2015TRO}. A virtual rigid body abstraction bridging the master and slave sides has enabled flying an arbitrary number of aerial robots with collision avoidance in~\cite{Schwager2016ICRA}. Force exchanges between a human and a system of unmanned aerial vehicles have been studied and validated in~\cite{Heinrich2017IJRR}. 

Because distributed synchronization of MRS-s demands inter-robot information exchanges which are practically constrained by the inter-robot distances~\cite{Paolo2011RSS}, robot teams with limited communication range need coordination strategies with guaranteed connectivity maintenance. For semi-autonomous MRS-s, existing work employs passivity-based control for both global~\cite{Paolo2011RSS, Paolo2013ICRA} and local~\cite{Lee2013TMECH} connectivity preservation. A potential function of the estimate of the algebraic connectivity of the swarm teleoperation system yields gradient-based controls that preserve global connectivity in~\cite{Paolo2011RSS}. Its extension in~\cite{Paolo2013ICRA} regulates the degree of connectivity of the remote group of robots. Because the algebraic connectivity is obtained by the algorithm in~\cite{Freeman2010Automatica}, the effectiveness of the designed controls and thus the safety of the teleoperated swarm rely on the accuracy and convergence rate of the connectivity estimates~\cite{Dimos2014Automatica}. As discussed in~\cite{Sabattini2017TRO}, the error in the connectivity estimates foils the ability to mitigate perturbations of global connectivity control even for first-order MRS-s. A potential function of relative distances between virtual kinematic points yields controls that are proven to maintain only the local connectivity of the virtual system in~\cite{Lee2013TMECH}.  

The main contribution of the paper is a constructive dynamic coupling and damping injection law for connectivity-preserving swarm teleoperation with a tree network. Without loss of generality, the proposed design assumes that only one slave robot can receive the user command from the master side. To maintain all interaction links in the tree network of the slave swarm, a customized potential function using inter-robot distances is employed to quantify the energy stored in these links. Then, set invariance analysis demonstrates that the connectivity of the swarm network can be preserved by properly upper-bounding the energy stored in the tree topology. Using the structural controllability of a tree network, the potential energy stored in the slave swarm has been provably upper-bounded by properly-designed control inputs using local information. To find an explicit law for updating control gains, the model reduction technique transforms the system dynamics into a first-order representation with state-dependent mismatches. A critical step in the design is that the impact of the state-dependent mismatches on connectivity preservation can be suppressed by dynamically modulating both the interconnections and the local damping injection of all slave robots according to their distances. Therefore, the proposed dynamic control can restrain the user-injected energy into the tree network of the slave swarm, and can prevent the negative impact of the operator command from disconnecting the tree network.

\section{Problem Formulation}\label{sec: problem formulation}
Consider a swarm teleoperation system that comprises one master robot and $N$ slave robots. An operator can command the group of slave robots to a desired location by operating the master robot. In the slave swarm, there is one informed slave robot that can receive the user command from the master side regardless of their distance~\cite{Lee2013TMECH}. Assume that the master robot and the informed slave have been passively connected. This paper then focuses mainly on preserving the connectivity of the slave swarm under the user perturbation transmitted from the master side. 

Let the slave swarm be a network of $n$-degree-of-freedom~($n$-DOF) Euler-Lagrange (EL) systems: 
\begin{equation}\label{equ1}
\begin{aligned}
\fbm{M}_{1}(\fbm{x}_{1})\cdot\ddbm{x}_{1}+\fbm{C}_{1}(\fbm{x}_{1},\dbm{x}_{1})\cdot\dbm{x}_{1}=&\fbm{u}_{1}+\fbm{f}\textrm{,}\\
\fbm{M}_{s}(\fbm{x}_{s})\cdot\ddbm{x}_{s}+\fbm{C}_{s}(\fbm{x}_{s},\dbm{x}_{s})\cdot\dbm{x}_{s}=&\fbm{u}_{s}\textrm{.}
\end{aligned}
\end{equation}
In \eq{equ1}: the subscript $1$ indicates the informed slave that receives user command from the master side; subscripts $s=2,\cdots,N$ index the remaining $N-1$ uninformed slave robots; and $\fbm{f}$ is the time-varying user command from the master side. For every slave robot $i=1,\cdots,N$: its position, velocity and acceleration vectors are denoted by $\fbm{x}_{i}$, $\dbm{x}_{i}$ and $\ddbm{x}_{i}$, respectively; $\fbm{M}_{i}(\fbm{x}_{i})$ and $\fbm{C}_{i}(\fbm{x}_{i},\dbm{x}_{i})$ are the matrices of inertia and of Coriolis and centrifugal effects, respectively; and $\fbm{u}_{i}$ is the control force of the robot to be designed. The dynamics~\eqref{equ1} have the following properties:
\begin{enumerate}[label=P.\arabic*]
\item \label{P1}
The inertia matrices $\fbm{M}_{i}(\fbm{x}_{i})$ are symmetric, positive definite and uniformly bounded by: $\lambda_{i1}\fbm{I}\preceq\fbm{M}_{i}(\fbm{x}_{i})\preceq\lambda_{i2}\fbm{I}$ for any $\fbm{x}_{i}\in\mathbb{R}^{n}$, where $\lambda_{i1}>0$ and $\lambda_{i2}>0$;
\item \label{P2}
$\dbm{M}_{i}(\fbm{x}_{i})-2\fbm{C}_{i}(\fbm{x}_{i},\dbm{x}_{i})$ are skew-symmetric;
\item \label{P3}
$\fbm{C}_{i}(\fbm{x}_{i},\fbm{y}_{i})$ are linear in $\fbm{y}_{i}$, and there exist $c_{i}>0$ such that $\|\fbm{C}_{i}(\fbm{x}_{i},\fbm{y}_{i})\cdot\fbm{z}_{i}\|\leq c_{i}\cdot\|\fbm{y}_{i}\|\cdot\|\fbm{z}_{i}\|$, $\forall \fbm{x}_{i}, \fbm{y}_{i}, \fbm{z}_{i}\in\mathbb{R}^{n}$.
\end{enumerate}  

Let the information exchanges among all slave robots in the system be constrained by the same communication radius $r$. Then, slave robots $i$ and $j$ can exchange their information at time instant $t\geq 0$ only if their distance is strictly smaller than $r$, i.e., $\|\fbm{x}_{ij}\|=\|\fbm{x}_{i}(t)-\fbm{x}_{j}(t)\|<r$, $\forall i,j\in\{1,\cdots,N\}$. Say, slave $i$ and slave $j$ are adjacent to each other, and their bidirectional communication link $(i,j)$ exists if and only if there are information exchanges between them at time $t\geq 0$. Because all inter-slave interactions are limited by an identical communication radius $r$, this paper assumes and aims to maintain certain bidirectional interaction links $(i,j)$ of the slave swarm network by properly constraining $\|\fbm{x}_{ij}(t)\|$ for all time $t\geq 0$.  

Information exchanges among slave robots in the swarm can be represented by an undirected graph $\mathcal{G}(t)=\{\mathcal{V},\mathcal{E}(t)\}$. The vertex set $\mathcal{V}=\{1,\cdots,N\}$ collects all slave robots in the system. The edge set $\mathcal{E}(t)\subset\{(i,j)\in\mathcal{V}\times\mathcal{V}\}$ includes all communication links among slave robots. By definition, $(i,j)\in\mathcal{E}(t)$ if and only if robot $i$ and $j$ are exchanging information. For each slave robot $i\in\mathcal{V}$, its neighbourhood $\mathcal{N}_{i}(t)=\{j\in\mathcal{V}\ |\ (i,j) \in \mathcal{E}(t)\}$ collects all slave robots that are adjacent to it at time instant $t\geq 0$. In $\mathcal{G}(t)$, a path between two slaves $i$ and $j$ is a sequence of vertices $i,a,b,\cdots,j$ such that consecutive vertices are adjacent in which there is no repeated vertex and edge. Then, the graph $\mathcal{G}(t)$ is said to be connected if and only if there is a path between every two distinct vertices. Further, $\mathcal{G}(t)$ is a tree if every two vertices are connected by exactly one path.

Given a tree $\mathcal{G}(t)$ of order $N$, the associated weighted adjacency matrix $\fbm{A}=[a_{ij}]$ with $a_{ij}$ the $(i,j)$-th element of $\fbm{A}$ is defined by: $a_{ij}=a_{ji}>0$ if $(i,j)\in\mathcal{E}(t)$, and $a_{ij}=0$ otherwise. The weighted Laplacian matrix $\fbm{L}=[l_{ij}]$ with $l_{ij}$ the $(i,j)$-th element of $\fbm{L}$ is defined by $l_{ij}=\sum_{k\in\mathcal{N}_{i}(t)}a_{ik}$ if $j=i$, and $l_{i,j}=-a_{ij}$ otherwise. In particular, if $a_{ij}\in\{0,1\}$, $\fbm{L}$ becomes an unweighted Laplacian matrix $\bbm{L}$. Let an orientation of $\mathcal{G}(t)$ define an oriented graph $\mathcal{G}^{\ast}(t)$, and label each oriented edge $(i,j)$ as $e_{k}$, $k=1,\cdots,N-1$, with weight $w(e_{k})=a_{ij}=a_{ji}$. Let the associated incidence matrix be $\fbm{D}=[d_{hk}]$ in which the $(h,k)$-th element $d_{hk}$ is defined by: $d_{hk}=1$ if vertex $h$ is the head of edge $e_{k}$; $d_{hk}=-1$ if $h$ is the tail of $e_{k}$ and $d_{hk}=0$ otherwise. The edge Laplacian of the oriented graph is then defined as $\fbm{L}_{e}=\tbm{D}\fbm{D}$. The follows lemmas~\cite{Egerstedt2010Princeton} will be used to prove~\lem{lem1} in~\sect{sec: main results}:
\begin{enumerate}[label=L.\arabic*]
\item \label{L1}
The second smallest eigenvalue $\lambda_{L}$ of the unweighted Laplacian $\bbm{L}$ is positive, i.e., $\lambda_{L}>0$.
\item \label{L2}
The set of nonzero eigenvalues of $\fbm{L}_{e}$ is equal to the set of nonzero eigenvalues of the unweighted Laplacian $\bbm{L}$.
\item \label{L3}
The weighted Laplacian matrix admits the decomposition $\fbm{L}=\fbm{D}\fbm{W}\tbm{D}$, with $\fbm{W}$ an $(N-1)\times (N-1)$ diagonal matrix with $w(e_{k})$, $k=1,\cdots,N-1$, on the diagonal.
\end{enumerate} 

The following assumptions on the initial configuration of the system and on the user force are adopted in this paper.
\begin{assumption}\label{ass1}
The initial interaction network $\mathcal{G}(0)$ of the slave swarm is a tree and each pair of initially adjacent robots $(i,j)\in\mathcal{E}(0)$ is strictly within their communication distance, i.e., $\|\fbm{x}_{ij}(0)\|< r-\epsilon$ for some $\epsilon>0$.
\end{assumption} 
\begin{assumption}\label{ass2}
The user command from the master side is bounded by $\|\fbm{f}\|\leq\overline{f}$.
\end{assumption}

Because every connected network contains at least one spanning tree~\cite{Egerstedt2010Princeton}, this paper assumes directly the initial interaction network $\mathcal{G}(0)$ to be a tree in~\ass{ass1}. Considering the inertia of second-order dynamics, it further adopts the same distance condition $\|\fbm{x}_{ij}(0)\|<r-\epsilon$ on every pair of initially adjacent robots $(i,j)\in\mathcal{E}(0)$ as in connectivity preservation of fully autonomous second-order MRS-s~\cite{Su2010SCL}. Then, the connectivity-preserving swarm teleoperation problem addressed in this paper can be formulated as:
\begin{problem}\label{prob1}
Given the teleoperated swarm system~\eqref{equ1} satisfying Assumptions~\ref{ass1} and~\ref{ass2}, find distributed control laws to drive the system such that:
\begin{enumerate}
\item[1.]
The velocities of, and the position errors between, every two slave robots $i,j=1,\cdots,N$ are bounded in the presence of the user command, i.e., $\{\dbm{x}_{i}, \dbm{x}_{j}, \fbm{x}_{i}-\fbm{x}_{j}\}\in\mathcal{L}_{\infty}$ when $\fbm{f}\neq\fbm{0}$;
\item[2.]
All slave robots $i,j=1,\cdots,N$ are synchronized in the absence of the user command, i.e., $\{\dbm{x}_{i}, \dbm{x}_{j}, \fbm{x}_{i}-\fbm{x}_{j}\}\to\fbm{0}$ when $\fbm{f}=\fbm{0}$;
\item[3.]
All interaction links $(i,j)\in\mathcal{E}(0)$ of the initial slave network $\mathcal{G}(0)$ are maintained, i.e., $(i,j)\in\mathcal{E}(t)$ $\forall t\geq 0$ if $(i,j)\in\mathcal{E}(0)$, and, with them, the connectivity of the slave swarm $\mathcal{G}(t)$ is preserved. 
\end{enumerate}
\end{problem}

In~\prob{prob1}: the first two objectives are similar to those of conventional bilateral teleoperation systems~\cite{Lee2010TRO}; the last objective together with~\ass{ass1} illustrate that the proposed dynamic strategy can preserve the connectivity of any connected swarm teleoperation network by maintaining a spanning tree of the slave swarm. Future research will take advantage of switching spanning trees for connectivity-preserving swarm teleoperation.

The following definition of input-to-state stability will be used in~\sect{sec: main results} to quantify robust position synchronization of the teleoperated swarm.
\begin{definition}\label{iss}
~\cite{Sontag2008Springer} The perturbed nonlinear system
\begin{align*}
\dbm{x}(t)=f(\fbm{x}(t),\fbm{u}(t))
\end{align*}
is ISS with input $\fbm{u}(t)\in\mathbb{R}^{m}$ and state $\fbm{x}(t)\in\mathbb{R}^{n}$ if there exist functions $\alpha\in\mathcal{K}$ and $\beta\in\mathcal{KL}$ such that for any $t\geq 0$:
\begin{align*}
\|\fbm{x}(t)\|\leq\beta(\|\fbm{x}(0)\|,t)+\alpha\left(\sup\limits_{0\leq\tau\leq t}\|\fbm{u}(\tau)\|\right)\textrm{.}
\end{align*}
Further, if $\beta(\|\fbm{x}(0)\|,t)$ decreases exponentially with respect to $t$, the system is exponentially ISS. 
\end{definition}

\section{Main Results}\label{sec: main results}

By~\ass{ass1} and the last item in~\prob{prob1}, connectivity maintenance of the swarm teleoperation system is guaranteed by rendering invariant the edge set $\mathcal{E}(t)$ of the tree network $\mathcal{G}(0)$ for any $t\geq 0$. Because inter-robot communication links $(i,j)\in\mathcal{E}(t)$ are constrained by their distances, this paper employs the following functions for verifying the distance constraints:
\begin{equation}\label{equ2}
\psi(\|\fbm{x}_{ij}\|)=\frac{P\cdot\|\fbm{x}_{ij}\|^{2}}{r^{2}-\|\fbm{x}_{ij}\|^{2}+Q}\textrm{,}
\end{equation}
where $P$ and $Q$ are positive constants to be designed. For every $\|\fbm{x}_{ij}\|\in[0,r]$, the function $\psi(\|\fbm{x}_{ij}\|)$ is continuous, positive definite and strictly increasing with respect to $\|\fbm{x}_{ij}\|$~\cite{Su2010SCL}. Then, the potential energy stored in all links $(i,j)\in\mathcal{E}(0)$ can be described by
\begin{equation}\label{equ3}
V_{p}=\frac{1}{2}\cdot\sum^{N}_{i=1}\sum_{j\in\mathcal{N}_{i}(0)}\psi(\|\fbm{x}_{ij}\|)\textrm{.}
\end{equation}

The following proposition illustrates the feasibility of connectivity preservation using function~\eqref{equ2}:
\begin{proposition}\label{prop1}
Under~\ass{ass1} and given any $\Delta>0$, select $Q$ and $P$ as follows
\begin{equation}\label{equ4}
\begin{aligned}
&\left[r^{2}-(N-1)(r-\epsilon)^{2}\right]Q+\left[r^{2}-(r-\epsilon)^{2}\right]r^{2}>0\textrm{,}\\
&P>\frac{\left[r^{2}-(r-\epsilon)^{2}+Q\right]Q\cdot\Delta}{\left[r^{2}-(r-\epsilon)^{2}+Q\right]r^{2}-(N-1)Q\cdot(r-\epsilon)^{2}}\textrm{.}
\end{aligned}
\end{equation}
It guarantees then that
\begin{align*}
V_{p}(0)+\Delta<\frac{P\cdot r^{2}}{Q}=\psi_{\max}\textrm{.}
\end{align*}
\end{proposition}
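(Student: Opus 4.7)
The plan is to bound $V_p(0)$ directly using the tree structure from Assumption 1, then translate the desired inequality $V_p(0)+\Delta<P r^2/Q$ into an explicit condition on $P$ and $Q$ that coincides with \eqref{equ4}. The key structural fact I will exploit is that a tree on $N$ vertices has exactly $N-1$ edges, and after accounting for the double-counting in \eqref{equ3}, $V_p(0)=\sum_{(i,j)\in\mathcal{E}(0)}\psi(\|\fbm{x}_{ij}(0)\|)$.

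First, using the strict monotonicity of $\psi$ on $[0,r)$ noted right after \eqref{equ2} together with the bound $\|\fbm{x}_{ij}(0)\|<r-\epsilon$ from Assumption~\ref{ass1}, I would bound each edge term by $\psi(r-\epsilon)$ and conclude
\begin{equation*}
V_p(0)\;<\;(N-1)\,\psi(r-\epsilon)\;=\;\frac{(N-1)P(r-\epsilon)^{2}}{r^{2}-(r-\epsilon)^{2}+Q}\textrm{.}
\end{equation*}
It therefore suffices to show that the choices in \eqref{equ4} force
\begin{equation*}
\frac{(N-1)P(r-\epsilon)^{2}}{r^{2}-(r-\epsilon)^{2}+Q}+\Delta\;<\;\frac{P\,r^{2}}{Q}\textrm{.}
\end{equation*}

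Next, I would multiply through by the positive quantity $Q\bigl[r^{2}-(r-\epsilon)^{2}+Q\bigr]$ and regroup the $P$ terms on the left, yielding the equivalent inequality
\begin{equation*}
P\Bigl\{\bigl[r^{2}-(r-\epsilon)^{2}+Q\bigr]r^{2}-(N-1)Q(r-\epsilon)^{2}\Bigr\}\;>\;\bigl[r^{2}-(r-\epsilon)^{2}+Q\bigr]Q\,\Delta\textrm{.}
\end{equation*}
The coefficient of $P$ on the left is exactly $r^{2}\bigl[r^{2}-(r-\epsilon)^{2}\bigr]+Q\bigl[r^{2}-(N-1)(r-\epsilon)^{2}\bigr]$, whose strict positivity is precisely the first condition in \eqref{equ4}. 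Given that positivity, dividing by it produces the lower bound on $P$ stated in the second line of \eqref{equ4}, completing the equivalence.

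The argument is essentially algebraic once the tree count is in place, so I do not anticipate a true obstacle; the one place requiring care is verifying that the denominator in the prescribed lower bound on $P$ is positive, since otherwise the stated inequality on $P$ would be vacuous or wrong-signed. That is exactly why the first line of \eqref{equ4} is imposed as a separate, prior condition, and I would highlight this logical ordering explicitly: first pick $Q$ so that the denominator is positive, then pick $P$ large enough to dominate the $\Delta$-dependent term.
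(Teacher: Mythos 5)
Your proof is correct and follows essentially the same route as the paper's: bound $V_{p}(0)$ by $(N-1)\psi(r-\epsilon)$ using \ass{ass1} and the monotonicity of $\psi$, then clear denominators to see that the first line of~\eqref{equ4} makes the coefficient of $P$ positive and the second line makes $P$ large enough to absorb $\Delta$. The paper packages the same algebra through an auxiliary factor $\omega=P/\bigl(\bigl[r^{2}-(r-\epsilon)^{2}+Q\bigr]Q\bigr)$, but the logical ordering (first $Q$, then $P$) and all the estimates coincide with yours.
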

\begin{proof}
\ass{ass1} and the property that~\eqref{equ2} imply that
\begin{align*}
V_{p}(0)<&\frac{1}{2}\cdot\sum^{N}_{i=1}\sum_{j\in\mathcal{N}_{i}(0)}\frac{P\cdot(r-\epsilon)^{2}}{r^{2}-(r-\epsilon)^{2}+Q}\\
=&\frac{P\cdot(N-1)\cdot(r-\epsilon)^{2}}{r^{2}-(r-\epsilon)^{2}+Q}\textrm{,}
\end{align*} 
where $N-1$ is the number of links in $\mathcal{G}(0)$. Let
\begin{align*}
\omega=\frac{P}{\left[r^{2}-(r-\epsilon)^{2}+Q\right]Q}>0\textrm{.}
\end{align*}
It follows then that
\begin{align*}
&\psi_{\max}-V_{p}(0)\geq \frac{P\cdot r^{2}}{Q}-\frac{P\cdot(N-1)\cdot(r-\epsilon)^{2}}{r^{2}-(r-\epsilon)^{2}+Q}\\
=&\omega\cdot\Big[r^{2}\left(r^{2}-(r-\epsilon)^{2}+Q\right)-(N-1)Q\cdot(r-\epsilon)^{2}\Big]\\
=&\omega\cdot\Big(\left[r^{2}-(N-1)(r-\epsilon)^{2}\right]Q+\left[r^{2}-(r-\epsilon)^{2}\right]r^{2}\Big)>0
\end{align*}
can be ensured by selecting $Q>0$ small enough such that
\begin{align*}
\begin{aligned}
&\left[r^{2}-(N-1)(r-\epsilon)^{2}\right]Q+\left[r^{2}-(r-\epsilon)^{2}\right]r^{2}>0\textrm{.}
\end{aligned}
\end{align*} 
After setting $Q$ as above, choosing $P>0$ with
\begin{align*}
P>\frac{\left[r^{2}-(r-\epsilon)^{2}+Q\right]Q\cdot\Delta}{\left[r^{2}-(r-\epsilon)^{2}+Q\right]r^{2}-(N-1)Q\cdot(r-\epsilon)^{2}}
\end{align*} 
can make $V_{p}(0)+\Delta<\psi_{\max}$.
\end{proof}

Based on~\prop{prop1}, the distance constraint on every link $(i,j)\in\mathcal{E}(0)$ can be examined by the proposition below:
\begin{proposition}\label{prop2}
Under~\ass{ass1} and given any $\Delta>0$, let $P$ and $Q$ be selected to satisfy~\eqref{equ4}. At any time $t\geq 0$, if 
\begin{equation}\label{equ5}
V_{p}(\tau)\leq V_{p}(0)+\Delta\textrm{,}\quad \forall\tau\in[0,t]\textrm{,}
\end{equation}
then $\|\fbm{x}_{ij}(t)\|<r$ for every $(i,j)\in\mathcal{E}(0)$.
\end{proposition}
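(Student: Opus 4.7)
The plan is to combine~\prop{prop1} with the monotonicity of $\psi(\cdot)$ to rule out any initial edge reaching the communication radius while the potential energy stays bounded by $V_{p}(0)+\Delta$. Indeed, the selections in~\eqref{equ4} ensure via~\prop{prop1} the strict inequality $V_{p}(0)+\Delta<Pr^{2}/Q=\psi_{\max}=\psi(r)$, so the hypothesis~\eqref{equ5} automatically yields $V_{p}(\tau)<\psi_{\max}$ for every $\tau\in[0,t]$.

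Next I would note from~\eqref{equ2} that $\psi(\cdot)$ is continuous and strictly increasing on $[0,r]$ with $\psi(r)=\psi_{\max}$, as already recorded below~\eqref{equ2}. Because the double sum in~\eqref{equ3} counts each undirected edge of $\mathcal{G}(0)$ exactly twice and each summand is non-negative, one obtains the per-edge bound $\psi(\|\fbm{x}_{ij}(\tau)\|)\leq V_{p}(\tau)\leq V_{p}(0)+\Delta<\psi_{\max}$ for every $(i,j)\in\mathcal{E}(0)$ and every $\tau\in[0,t]$.

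The proof is then completed by a continuity-based contradiction. Suppose some initial edge $(i,j)\in\mathcal{E}(0)$ satisfies $\|\fbm{x}_{ij}(t)\|\geq r$. Since $\|\fbm{x}_{ij}(0)\|<r-\epsilon<r$ by~\ass{ass1} and $\|\fbm{x}_{ij}(\cdot)\|$ is continuous in $t$, the intermediate value theorem supplies a first crossing instant $t^{\ast}\in(0,t]$ at which $\|\fbm{x}_{ij}(t^{\ast})\|=r$; strict monotonicity of $\psi$ then forces $\psi(\|\fbm{x}_{ij}(t^{\ast})\|)=\psi_{\max}$, which contradicts the per-edge bound instantiated at $\tau=t^{\ast}$. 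The only delicate point I anticipate is writing down that per-edge bound cleanly, which rests on the non-negativity of $\psi$ and on reading the double sum in~\eqref{equ3} as a sum over the undirected edges of the initial tree; everything else is a direct chaining of~\prop{prop1}, monotonicity of $\psi$, and continuity of the distance trajectories in $t$.
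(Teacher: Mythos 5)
Your overall strategy --- chaining \prop{prop1} with the monotonicity of $\psi$ and a contradiction at a first crossing --- is the same as the paper's, but there is a subtle circularity in the step where you assert the per-edge bound $\psi(\|\fbm{x}_{ij}(\tau)\|)\leq V_{p}(\tau)$ \emph{for every} $\tau\in[0,t]$. That bound rests on every summand of~\eqref{equ3} being non-negative, yet the paper only guarantees that $\psi$ is positive definite and strictly increasing on $[0,r]$; from~\eqref{equ2}, $\psi(d)=Pd^{2}/(r^{2}-d^{2}+Q)$ is in fact \emph{negative} once $d^{2}>r^{2}+Q$. So at an arbitrary $\tau$ you cannot claim that the remaining edges contribute non-negatively --- that is precisely the kind of over-stretching the proposition is meant to exclude, and the hypothesis~\eqref{equ5} alone does not forbid a configuration in which one edge's large positive $\psi$ is cancelled by another edge's negative $\psi$.

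This gap interacts with your choice of $t^{\ast}$: you take the first crossing of the \emph{particular} violating edge $(i,j)$, which does not preclude some other edge $(l,m)$ from having exceeded $r$ (indeed $\sqrt{r^{2}+Q}$) strictly before $t^{\ast}$, in which case the per-edge bound you invoke at $\tau=t^{\ast}$ is unjustified. The fix is small and is exactly what the paper's proof encodes by supposing that the link of \emph{maximal} length equals $r$: define $t^{\ast}$ as the first instant at which \emph{any} edge of $\mathcal{E}(0)$ reaches length $r$ (the minimum over finitely many first-crossing times, each well defined by continuity and \ass{ass1}). At that instant every edge satisfies $\|\fbm{x}_{lm}(t^{\ast})\|\leq r$, so every summand of $V_{p}(t^{\ast})$ lies in $[0,\psi_{\max}]$ while the crossing edge contributes exactly $\psi_{\max}$, whence $V_{p}(t^{\ast})\geq\psi_{\max}>V_{p}(0)+\Delta$, contradicting~\eqref{equ5}. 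With that repair your argument coincides with the paper's.
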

\begin{proof}
By~\prop{prop1}, \eq{equ5} implies that
\begin{align*}
V_{p}(\tau)<\psi_{\max}\textrm{,}\quad \forall \tau\in[0,t]\textrm{.}
\end{align*}
\ass{ass1} and the property of~\eqref{equ2} further lead to
\begin{align*}
0\leq\psi(\|\fbm{x}_{ij}(0)\|)<\psi_{\max}\textrm{,}\quad \forall (i,j)\in\mathcal{E}(0)\textrm{.}
\end{align*}
Suppose that, at time instant $t$, link $(i,j)$ has among all links in $\mathcal{E}(0)$ the maximal length $\|\fbm{x}_{ij}(t)\|=r$. Then, $\psi(\|\fbm{x}_{ij}(t)\|)=\psi_{\max}$ and $0\leq\psi(\|\fbm{x}_{lm}(t)\|)\leq \psi_{\max}$ for any other $(l,m)\in\mathcal{E}(0)$ because $\|\fbm{x}_{lm}(t)\|\leq r$ and $\psi(\cdot)$ is continuous, positive definite and strictly increasing on $[0,r]$. Hence, 
\begin{align*}
V_{p}(t)=\psi(\|\fbm{x}_{ij}(t)\|)+\sum_{(l,m)\in\overline{\mathcal{E}}(0)}\psi(\|\fbm{x}_{lm}(t)\|)\geq \psi_{\max}\textrm{,}
\end{align*}
where $\overline{\mathcal{E}}(0)=\mathcal{E}(0)-\{(i,j)\}$, which contradicts~\eqref{equ5}. Therefore, $\|\fbm{x}_{ij}(t)\|<r$ for every $(i,j)\in\mathcal{E}(0)$
\end{proof}

Together, the above two propositions demonstrate the fundamental principle of proving connectivity preservation: set invariance~\cite{Blanchini1999Auto,Ames2017TAC}. In this paper, the edge set $\mathcal{E}(t)$ is rendered invariant by properly constraining the distance $\|\fbm{x}_{ij}\|$ between each pair of initially adjacent slave robots $(i,j)\in\mathcal{E}(0)$ for swarm teleoperation with a tree network. Here, the potential functions~\eqref{equ2} and~\eqref{equ3} characterize the inter-slave distances in Propositions~\ref{prop1} and~\ref{prop2} and can thus be employed to investigate the invariance of $\mathcal{E}(t)$. A similar approach has been proposed in~\cite{Su2010SCL} for proving connectivity maintenance of autonomous double-integrator multi-agent systems. In contrast, this paper focuses on preserving the tree network of a slave swarm that is driven by a time-varying and unpredictable user command. More specifically, the main contribution of this paper is constructively designing a dynamic coupling and damping injection controller by the controllability of a tree network to obtain~\eqref{equ5} under the perturbation of the user input transmitted from the master side.

To bound the potential energy $V_{p}$ using the local information of each robot $i$, define a surface $\fbm{s}_{i}$ for each slave $i$ by:
\begin{equation}\label{equ6}
\fbm{s}_{i}=\dbm{x}_{i}+\sigma\cdot\bm{\theta}_{i}\textrm{,}
\end{equation}
where $i=1,\cdots,N$, and $\sigma>0$ and 
\begin{equation}\label{equ7}
\bm{\theta}_{i}=\sum_{j\in\mathcal{N}_{i}(0)}\nabla_{i}\psi(\|\fbm{x}_{ij}\|)
\end{equation}
with the gradient of $\psi(\|\fbm{x}_{ij}\|)$ with respect to $\fbm{x}_{i}$ given by
\begin{equation}\label{equ8}
\nabla_{i}\psi(\|\fbm{x}_{ij}\|)=\frac{2P\cdot\left(r^{2}+Q\right)}{\left(r^{2}-\|\fbm{x}_{ij}\|^{2}+Q\right)^{2}}\cdot\left(\fbm{x}_{i}-\fbm{x}_{j}\right)\textrm{.}
\end{equation}
Then, the swarm teleoperation system dynamics~\eqref{equ1} can be transformed into
\begin{equation}\label{equ9}
\begin{aligned}
\fbm{M}_{1}(\fbm{x}_{1})\cdot\dbm{s}_{1}+\fbm{C}_{1}(\fbm{x}_{1},\dbm{x}_{1})\cdot\fbm{s}_{1}=&\sigma\cdot\fbm{\Delta}_{1}+\fbm{u}_{1}+\fbm{f}\textrm{,}\\
\fbm{M}_{s}(\fbm{x}_{s})\cdot\dbm{s}_{s}+\fbm{C}_{s}(\fbm{x}_{s},\dbm{x}_{s})\cdot\fbm{s}_{s}=&\sigma\cdot\fbm{\Delta}_{s}+\fbm{u}_{s}
\end{aligned}
\end{equation}
where $s=2,\cdots,N$ index uninformed slave robots, and $\bm{\Delta}_{i}$ are state-dependent mismatches
\begin{equation}\label{equ10}
\fbm{\Delta}_{i}=\fbm{M}_{i}(\fbm{x}_{i})\cdot\dfbm{\theta}_{i}+\fbm{C}_{i}(\fbm{x}_{i},\dbm{x}_{i})\cdot\bm{\theta}_{i}\textrm{.}
\end{equation}

The following lemma is key to proving connectivity maintenance in the remainder of the paper:
\begin{lemma}\label{lem1}
Given the tree network $\mathcal{G}(0)$ of the teleoperated swarm~\eqref{equ1}, the following inequality holds:
\begin{equation}\label{equ11}
\begin{aligned}
\sum^{N}_{i=1}\tfbm{\theta}_{i}\bm{\theta}_{i}\geq \frac{4\lambda_{L}P}{r^{2}+Q}\cdot V_{p}\textrm{.}
\end{aligned}
\end{equation}
\end{lemma}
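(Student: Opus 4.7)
The plan is to recognize $\bm{\theta}_{i}$ as the $i$-th block of a weighted-Laplacian action on the stacked position vector, pass to the edge space via the incidence-matrix factorization in L.3, apply the tree-specific lower bound on the edge Laplacian from L.1 and L.2, and finish with a pointwise comparison between $w_{ij}^{2}\|\fbm{x}_{ij}\|^{2}$ and $\psi(\|\fbm{x}_{ij}\|)$. Concretely, define edge weights $w_{ij}=2P(r^{2}+Q)/(r^{2}-\|\fbm{x}_{ij}\|^{2}+Q)^{2}$ so that~\eqref{equ8} gives $\bm{\theta}_{i}=\sum_{j\in\mathcal{N}_{i}(0)}w_{ij}(\fbm{x}_{i}-\fbm{x}_{j})$. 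Stacking positions into $\fbm{x}\in\mathbb{R}^{nN}$ and letting $\fbm{L}$ denote the weighted Laplacian of $\mathcal{G}(0)$ with weights $\{w_{ij}\}$, the stacked $\bm{\theta}$ satisfies $\bm{\theta}=(\fbm{L}\otimes\fbm{I}_{n})\fbm{x}$, whence $\sum_{i=1}^{N}\tfbm{\theta}_{i}\bm{\theta}_{i}=\fbm{x}^{\mathsf{T}}(\fbm{L}^{2}\otimes\fbm{I}_{n})\fbm{x}$ by symmetry of $\fbm{L}$.

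Next, I would substitute $\fbm{L}=\fbm{D}\fbm{W}\tbm{D}$ from L.3 to obtain $\fbm{L}^{2}=\fbm{D}(\fbm{W}\fbm{L}_{e}\fbm{W})\tbm{D}$, and set $\tilfbm{z}=(\tbm{D}\otimes\fbm{I}_{n})\fbm{x}\in\mathbb{R}^{n(N-1)}$ so that the quadratic form becomes $\tilfbm{z}^{\mathsf{T}}(\fbm{W}\fbm{L}_{e}\fbm{W}\otimes\fbm{I}_{n})\tilfbm{z}$. Since $\mathcal{G}(0)$ is a tree, its incidence matrix $\fbm{D}$ has full column rank, so $\fbm{L}_{e}=\tbm{D}\fbm{D}$ is positive definite; L.1 and L.2 then identify its smallest eigenvalue as $\lambda_{L}>0$, yielding $\fbm{W}\fbm{L}_{e}\fbm{W}\succeq\lambda_{L}\fbm{W}^{2}$. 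Expanding the right side block-by-block gives
\[
\sum_{i=1}^{N}\tfbm{\theta}_{i}\bm{\theta}_{i}\;\geq\;\lambda_{L}\sum_{(i,j)\in\mathcal{E}(0)}w_{ij}^{2}\|\fbm{x}_{ij}\|^{2}.
\]

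To close the argument, I would use $r^{2}-\|\fbm{x}_{ij}\|^{2}+Q\leq r^{2}+Q$ together with~\eqref{equ2} to obtain the pointwise estimate
\[
w_{ij}^{2}\|\fbm{x}_{ij}\|^{2}=\frac{4P^{2}(r^{2}+Q)^{2}\|\fbm{x}_{ij}\|^{2}}{(r^{2}-\|\fbm{x}_{ij}\|^{2}+Q)^{4}}\;\geq\;\frac{4P}{r^{2}+Q}\,\psi(\|\fbm{x}_{ij}\|),
\]
then sum over $\mathcal{E}(0)$ and invoke $V_{p}=\sum_{(i,j)\in\mathcal{E}(0)}\psi(\|\fbm{x}_{ij}\|)$ (each unordered edge is counted twice in~\eqref{equ3}) to recover~\eqref{equ11}. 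The main obstacle is the middle step: on a non-tree graph $\fbm{L}_{e}$ carries a nontrivial cycle-space kernel, so the spectral bound $\fbm{L}_{e}\succeq\lambda_{L}\fbm{I}_{N-1}$ fails and one cannot pull the scalar $\lambda_{L}$ out in front of $V_{p}$. Recognizing that the tree hypothesis is precisely what promotes $\fbm{L}_{e}$ to positive definiteness---and only then invoking L.1--L.2---is the crux; the algebraic manipulations around it are routine.
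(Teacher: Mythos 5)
Your proposal is correct and follows essentially the same route as the paper's proof: identify $\bm{\theta}$ with the weighted-Laplacian action on the stacked positions, factor through $\fbm{L}=\fbm{D}\fbm{W}\tbm{D}$ to reach the edge Laplacian, use the tree property with L.1--L.2 to get $\fbm{L}_{e}\succeq\lambda_{L}\fbm{I}$, and finish with the pointwise bound $(r^{2}-\|\fbm{x}_{ij}\|^{2}+Q)\leq r^{2}+Q$ relating $w_{ij}^{2}\|\fbm{x}_{ij}\|^{2}$ to $\psi(\|\fbm{x}_{ij}\|)$. The only difference is cosmetic --- you absorb the diagonal weight matrix into the middle factor as $\fbm{W}\fbm{L}_{e}\fbm{W}\succeq\lambda_{L}\fbm{W}^{2}$ whereas the paper pushes $\fbm{W}\tbm{D}$ onto the vectors --- and your closing observation that the tree hypothesis is exactly what makes $\fbm{L}_{e}$ positive definite is the same crux the paper relies on.
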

\begin{proof}
Associated with the tree $\mathcal{G}(0)$, the weighted adjacency matrix $\fbm{A}=[a_{ij}]$ with $a_{ij}$ the $(i,j)$-th element is defined by
\begin{align*}
a_{ij}=\begin{cases}
\frac{2P\cdot(r^{2}+Q)}{\left(r^{2}-\|\fbm{x}_{ij}\|^{2}+Q\right)^{2}}\quad &\text{if } j\in\mathcal{N}_{i}(0)\textrm{,}\\
0\quad &\text{otherwise.}
\end{cases}
\end{align*}
Then, the corresponding weighted laplacian $\fbm{L}=[l_{ij}]$ with $l_{ij}$ the $(i,j)$-th element is given by
\begin{align*}
l_{ij}=\begin{cases}
-a_{ij}\quad &\text{if } j\neq i\\
\sum_{k\in\mathcal{N}_{i}(0)}a_{ik}\quad &\text{else if } j=i \textrm{.} 
\end{cases}
\end{align*} 

Let $\fbm{l}_{i}$ be the $i$-th row of $\fbm{L}$. It follows then that
\begin{align*}
\sum_{j\in\mathcal{N}_{i}(0)}\nabla_{i}\psi(\|\fbm{x}_{ij}\|)=\left(\fbm{l}_{i}\otimes\fbm{I}_{n}\right)\fbm{x}\textrm{,}
\end{align*}
where $\fbm{x}=[\tbm{x}_{1}\ \cdots\ \tbm{x}_{N}]^\mathsf{T}$. By the definition of $\bm{\theta}_{i}$ in~\eqref{equ7}, the left-hand side of~\eqref{equ11} becomes:
\begin{align*}
\sum^{N}_{i=1}\tfbm{\theta}_{i}\bm{\theta}_{i}&=\sum^{N}_{i=1}\tbm{x}\left(\fbm{l}_{i}\otimes\fbm{I}_{n}\right)^\mathsf{T}\left(\fbm{l}_{i}\otimes\fbm{I}_{n}\right)\fbm{x}\\
&=\tbm{x}\left[\left(\sum^{N}_{i=1}\tbm{l}_{i}\fbm{l}_{i}\right)\otimes\fbm{I}_{n}\right]\fbm{x}=\tbm{x}\left(\tbm{L}\fbm{L}\otimes\fbm{I}_{n}\right)\fbm{x}\textrm{.}
\end{align*}
By~Lemma~\ref{L3} and the definition of $\fbm{W}$ introduced in~\sect{sec: problem formulation}, it can further be re-organized as:
\begin{equation}\label{equ12}
\begin{aligned}
&\sum^{N}_{i=1}\tfbm{\theta}_{i}\bm{\theta}_{i}=\tbm{x}\left[\fbm{D}\fbm{W}\tbm{D}\fbm{D}\fbm{W}\tbm{D}\otimes\fbm{I}_{n}\right]\fbm{x}\\
=&\left[\left(\fbm{W}\tbm{D}\otimes\fbm{I}_{n}\right)\fbm{x}\right]^\mathsf{T}\left(\tbm{D}\fbm{D}\otimes\fbm{I}_{n}\right)\left[\left(\fbm{W}\tbm{D}\otimes\fbm{I}_{n}\right)\fbm{x}\right]\\
=&\tbbm{x}\left(\fbm{L}_{e}\otimes\fbm{I}_{n}\right)\bbm{x}\textrm{,}
\end{aligned}
\end{equation}
where $\bbm{x}=[\tbbm{x}_{1}\ \cdots\ \tbbm{x}_{N-1}]^\mathsf{T}=(\fbm{W}\tbm{D}\otimes\fbm{I}_{n})\fbm{x}$ with $\bbm{x}_{k}=\nabla_{i}\psi(\|\fbm{x}_{ij}\|)$ for $e_{k}=(i,j)$, $k=1,\cdots,N-1$. Here, $\bbm{x}$ stacks the weighted position mismatch between each pair of adjacent robots $(i,j)\in\mathcal{E}(0)$~\cite{Egerstedt2010Princeton}. 

By~Lemma~\ref{L1}, \ass{ass1} implies that the second smallest eigenvalue $\lambda_{L}$ of the unweighted laplacian $\overline{\fbm{L}}$ is positive, i.e., $\lambda_{L}>0$. Further, $\fbm{L}_{e}$ is an $(N-1)\times (N-1)$ matrix, because $\mathcal{G}(0)$ is a tree, and has smallest eigenvalue $\lambda_{L}$ by~Lemma~\ref{L2}. Therefore, the left-hand side of~\eqref{equ11} can be lower-bounded by:
\begin{align*}
&\sum^{N}_{i=1}\tfbm{\theta}_{i}\bm{\theta}_{i}\geq  \lambda_{L}\cdot\tbbm{x}\bbm{x}=\lambda_{L}\cdot\sum_{(i,j)\in\mathcal{E}(0)}\Big\|\nabla_{i}\psi(\|\fbm{x}_{ij}\|)\Big\|^{2}\\
=&\sum_{(i,j)\in\mathcal{E}(0)}\frac{4\lambda_{L}P\cdot(r^{2}+Q)^{2}}{\left(r^{2}-\|\fbm{x}_{ij}\|^{2}+Q\right)^{3}}\cdot\psi(\|\fbm{x}_{ij}\|)\geq \frac{4\lambda_{L}P}{r^{2}+Q}\cdot V_{p}\textrm{.}
\end{align*} 
\end{proof}

For a teleoperated swarm network, every slave robot~$i$ receives the information sent by its initial neighbours $j\in\mathcal{N}_{i}(0)$ if $\|\fbm{x}_{ij}(t)\|<r$ for all time $t\geq 0$. Then, $\psi(\|\fbm{x}_{ij}\|)$ in~\eqref{equ2} and $V_{p}$ in~\eqref{equ3} can be employed as follows to design controllers and to prove connectivity maintenance, respectively.

Assume that each link $(i,j)\in\mathcal{E}(0)$ has been maintained during the time interval $[0,t)$, i.e., $\|\fbm{x}_{ij}(\tau)\|<r$ for all $\tau\in[0,t)$ and every $(i,j)\in\mathcal{E}(0)$. It implies that the position $\fbm{x}_{j}(t)$ of $j\in\mathcal{N}_{i}(0)$ can be employed in the control of robot $i$ at time $t$ to prove that $\|\fbm{x}_{ij}(t)\|<r$ for all $(i,j)\in\mathcal{E}(0)$ by~\prop{prop2}. Then, connectivity maintenance can be proven by induction on time~\cite{Su2010SCL}. Thus, the following control is proposed to render positively invariant the edge set $\mathcal{E}(0)$:
\begin{equation}\label{equ13}
\fbm{u}_{i}=-K_{i}(t)\cdot\fbm{s}_{i}-D_{i}\dbm{x}_{i}-B_{i}\bm{\theta}_{i}\textrm{,}
\end{equation}  
where $i=1,\cdots,N$ index all slave robots, and $K_{i}(t)$, $D_{i}$ and $B_{i}$ are positive gains to be determined. 

\begin{remark}\label{rem1}
\normalfont
By the definition of $\fbm{s}_{i}$ in~\eqref{equ6}, the control law $\fbm{u}_{i}$ can be rewritten as
\begin{align*}
\fbm{u}_{i}=-\left[\sigma\cdot K_{i}(t)+B_{i}\right]\cdot\bm{\theta}_{i}-\left[K_{i}(t)+D_{i}\right]\cdot\dbm{x}_{i}\textrm{,}
\end{align*}
in which the first and the second terms are the coupling and the damping injection force, respectively. Note that here the argument $t$ is utilized to indicate that $K_{i}(t)$ is state-dependent and thus time-varying.   More specifically, $K_{i}(t)$ is dynamically updated according to the distances $\|\fbm{x}_{ij}\|$ between slave robot $i$ and its neighbours $j\in\mathcal{N}_{i}(0)$.
\end{remark}

Connectivity preservation is then investigated by the following Lyapunov candidate:
\begin{equation}\label{equ14}
\begin{aligned}
V=\frac{1}{2}\cdot\sum^{N}_{i=1}\frac{1}{B_{i}+\sigma D_{i}}\cdot\tbm{s}_{i}\fbm{M}_{i}(\fbm{x}_{i})\cdot\fbm{s}_{i}+V_{p}\textrm{,}
\end{aligned}
\end{equation}
in which $V_{p}$ has been defined in~\eqref{equ3}. Along the transformed system dynamics~\eqref{equ9} in closed-loop with the control~\eqref{equ13}, the derivative of $V$ is 
\begin{align*}
\dot{V}=&\frac{1}{2}\cdot\sum^{N}_{i=1}\frac{1}{B_{i}+\sigma D_{i}}\cdot\left[\tbm{s}_{i}\dbm{M}_{i}(\fbm{x}_{i})\cdot\fbm{s}_{i}+2\tbm{s}_{i}\fbm{M}_{i}(\fbm{x}_{i})\cdot\dbm{s}_{i}\right]\\
&+\frac{1}{2}\cdot\sum^{N}_{i=1}\sum_{j\in\mathcal{N}_{i}(0)}\left[\dtbm{x}_{i}\nabla_{i}\psi(\|\fbm{x}_{ij}\|)+\dtbm{x}_{j}\nabla_{j}\psi(\|\fbm{x}_{ij}\|)\right]\\
=&\sum^{N}_{i=1}\frac{1}{B_{i}+\sigma D_{i}}\cdot\left[\sigma\cdot\tbm{s}_{i}\fbm{\Delta}_{i}-K_{i}(t)\cdot\tbm{s}_{i}\fbm{s}_{i}\right]+\frac{\tbm{s}_{1}\fbm{f}}{B_{1}+\sigma D_{1}}\\
&-\sum^{N}_{i=1}\frac{\tbm{s}_{i}\left(D_{i}\dbm{x}_{i}+B_{i}\bm{\theta}_{i}\right)}{B_{i}+\sigma D_{i}}+\sum^{N}_{i=1}\sum_{j\in\mathcal{N}_{i}(0)}\dtbm{x}_{i}\nabla_{i}\psi(\|\fbm{x}_{ij}\|)\textrm{,}
\end{align*}
where Property~\ref{P1} and~\ref{P2} of~\eqref{equ1} and~\ass{ass1} have been applied. The definition of $\fbm{s}_{i}$ in~\eqref{equ6} implies that
\begin{align*}
&\tbm{s}_{i}(D_{i}\dbm{x}_{i}+B_{i}\bm{\theta}_{i})\\
=&D_{i}\dtbm{x}_{i}\dbm{x}_{i}+\sigma D_{i}\dtbm{x}_{i}\bm{\theta}_{i}+B_{i}\dtbm{x}_{i}\bm{\theta}_{i}+\sigma B_{i}\tfbm{\theta}_{i}\bm{\theta}_{i}\\
=&D_{i}\dtbm{x}_{i}\dbm{x}_{i}+\sigma B_{i}\tfbm{\theta}_{i}\bm{\theta}_{i}+(B_{i}+\sigma D_{i})\cdot\sum_{j\in\mathcal{N}_{i}(0)}\dtbm{x}_{i}\nabla_{i}\psi(\|\fbm{x}_{ij}\|)\textrm{.}
\end{align*}
Here it utilizes the fact that $\bm{\theta}_{i}$ in~\eqref{equ7} can be employed in the control $\fbm{u}_{i}$ at time instant $t$ based on the assumption that $\mathcal{E}(\tau)=\mathcal{E}(0)$ for all $\tau\in [0,t)$. Thus
\begin{equation}\label{equ15}
\begin{aligned}
\dot{V}=&\sum^{N}_{i=1}\frac{\sigma\cdot\tbm{s}_{i}\bm{\Delta}_{i}-\sigma B_{i}\tfbm{\theta}_{i}\bm{\theta}_{i}}{B_{i}+\sigma D_{i}}+\frac{\tbm{s}_{1}\fbm{f}}{B_{1}+\sigma D_{1}}\\
&-\sum^{N}_{i=1}\frac{1}{B_{i}+\sigma D_{i}}\cdot\big[K_{i}(t)\cdot\tbm{s}_{i}\fbm{s}_{i}+D_{i}\dtbm{x}_{i}\dbm{x}_{i}\big]\textrm{.}
\end{aligned}
\end{equation}

By the definition of $\bm{\theta}_{i}$ in~\eqref{equ7}, its derivative is 
\begin{equation}\label{equ16}
\begin{aligned}
\dfbm{\theta}_{i}=&\sum_{j\in\mathcal{N}_{i}(0)}\frac{8P\cdot(r^{2}+Q)\cdot\tbm{x}_{ij}\dbm{x}_{ij}\fbm{x}_{ij}}{\left(r^{2}-\|\fbm{x}_{ij}\|^{2}+Q\right)^{3}}\\
&+\sum_{j\in\mathcal{N}_{i}(0)}\frac{2P\cdot(r^{2}+Q)\cdot(\dbm{x}_{i}-\dbm{x}_{j})}{\left(r^{2}-\|\fbm{x}_{ij}\|^{2}+Q\right)^{2}}\textrm{.}
\end{aligned}
\end{equation}
Simple algebraic manipulations lead to
\begin{equation}\label{equ17}
\begin{aligned}
&\tbm{s}_{i}\fbm{M}_{i}(\fbm{x}_{i})\cdot\dfbm{\theta}_{i}\leq \sum_{j\in\mathcal{N}_{i}(0)}\left[2\left(\eta_{i}+\gamma_{i}\right)\left(\dtbm{x}_{i}\dbm{x}_{i}+\dtbm{x}_{j}\dbm{x}_{j}\right)\right]\\
&+\sum_{j\in\mathcal{N}_{i}(0)}\Bigg[\frac{16\lambda^{2}_{i2}P^{2}\cdot(r^{2}+Q)^{2}\cdot\|\fbm{x}_{ij}\|^{4}}{\eta_{i}\cdot\left(r^{2}-\|\fbm{x}_{ij}\|^{2}+Q\right)^{6}}\cdot\tbm{s}_{i}\fbm{s}_{i}\\
&\quad \quad \quad \quad \quad +\frac{\lambda^{2}_{i2}P^{2}\cdot\left(r^{2}+Q\right)^{2}}{\gamma_{i}\cdot\left(r^{2}-\|\fbm{x}_{ij}\|^{2}+Q\right)^{4}}\cdot\tbm{s}_{i}\fbm{s}_{i}\Bigg]
\end{aligned}
\end{equation}
with $\eta_{i}>0$ and $\gamma_{i}>0$, and that
\begin{equation}\label{equ18}
\begin{aligned}
&\tbm{s}_{i}\fbm{C}_{i}(\fbm{x}_{i},\dbm{x}_{i})\cdot\bm{\theta}_{i}\\
\leq &\sum_{j\in\mathcal{N}_{i}(0)}\Bigg[\frac{c^{2}_{i}P^{2}\cdot\left(r^{2}+Q\right)^{2}\cdot\|\fbm{x}_{ij}\|^{2}}{2\zeta_{i}\cdot\left(r^{2}-\|\fbm{x}_{ij}\|^{2}+Q\right)^{4}}\cdot\tbm{s}_{i}\fbm{s}_{i}+2\zeta_{i}\dtbm{x}_{i}\dbm{x}_{i}\Bigg]
\end{aligned}
\end{equation}
with $\zeta_{i}>0$, by Property~\ref{P1} and~\ref{P3} of~\eqref{equ1}, respectively. Hence, the impact of the mismatch $\fbm{\Delta}_{i}$ given in~\eqref{equ10} can be upper-bounded by
\begin{equation}\label{equ19}
\begin{aligned}
\tbm{s}_{i}\fbm{\Delta}_{i}\leq&\sum_{j\in\mathcal{N}_{i}(0)}\Big[\Lambda_{ij}(t)\cdot\tbm{s}_{i}\fbm{s}_{i}+2(\eta_{i}+\gamma_{i})\cdot\dtbm{x}_{j}\dbm{x}_{j}\\
&\quad \quad \quad \quad +2(\eta_{i}+\gamma_{i}+\zeta_{i})\cdot\dtbm{x}_{i}\dbm{x}_{i}\Big]\textrm{,}
\end{aligned}
\end{equation}
where 
\begin{align*}
&\Lambda_{ij}(t)=\frac{16\lambda^{2}_{i2}P^{2}\cdot(r^{2}+Q)^{2}\cdot\|\fbm{x}_{ij}\|^{4}}{\eta_{i}\cdot\left(r^{2}-\|\fbm{x}_{ij}\|^{2}+Q\right)^{6}}\\
&+\frac{\lambda^{2}_{i2}P^{2}\cdot\left(r^{2}+Q\right)^{2}}{\gamma_{i}\cdot\left(r^{2}-\|\fbm{x}_{ij}\|^{2}+Q\right)^{4}}+\frac{c^{2}_{i}P^{2}\cdot\left(r^{2}+Q\right)^{2}\cdot\|\fbm{x}_{ij}\|^{2}}{2\zeta_{i}\cdot\left(r^{2}-\|\fbm{x}_{ij}\|^{2}+Q\right)^{4}}\textrm{.}
\end{align*}
The user-injected energy can be measured by
\begin{equation}\label{equ20}
\frac{\tbm{s}_{1}\fbm{f}}{B_{1}+\sigma D_{1}}\leq \frac{1}{4\Gamma}\cdot\|\fbm{f}\|^{2}+\frac{\Gamma}{\left(B_{1}+\sigma D_{1}\right)^{2}}\cdot\tbm{s}_{1}\fbm{s}_{1}\textrm{,}
\end{equation} 
where $\Gamma>0$. Then, $\dot{V}$ can be upper-bounded below by substitution of~\eqref{equ19} and~\eqref{equ20} in~\eqref{equ15}:
\begin{equation}\label{equ21}
\begin{aligned}
\dot{V}\leq&-\sum^{N}_{i=1}\frac{\overline{K}_{i}(t)\cdot\tbm{s}_{i}\fbm{s}_{i}+\overline{D}_{i}\dtbm{x}_{i}\dbm{x}_{i}+\sigma B_{i}\tfbm{\theta}_{i}\bm{\theta}_{i}}{B_{i}+\sigma D_{i}}+\frac{\|\fbm{f}\|^{2}}{4\Gamma}\textrm{,}
\end{aligned}
\end{equation}  
where 
\begin{equation}\label{equ22}
\begin{aligned}
\overline{K}_{i}(t)=& K_{i}(t)-\sigma\cdot\sum_{j\in\mathcal{N}_{i}(0)}\Lambda_{ij}(t)-\frac{\Gamma_{i}}{\left(B_{1}+\sigma D_{1}\right)^{2}}\textrm{,}\\
\overline{D}_{i}=&D_{i}-2\sigma\cdot\sum_{j\in\mathcal{N}_{i}(0)}(\eta_{i}+\gamma_{i}+\zeta_{i}+\eta_{j}+\gamma_{j})
\end{aligned}
\end{equation}
with $\Gamma_{i}=\Gamma$ if $i=1$ and $\Gamma_{i}=0$ otherwise.

With~\lem{lem1}, invariance of the edge set $\mathcal{E}(0)$, and thus connectivity maintenance of the swarm teleoperation system~\eqref{equ1} can be validated by the following theorem:
\begin{theorem}\label{theorem1}
Under Assumptions~\ref{ass1} and~\ref{ass2}, connectivity of the teleoperated swarm~\eqref{equ1} can be maintained by rendering invariant the edge set $\mathcal{E}(0)$, if the proposed control law~\eqref{equ13} is designed for every slave $i=1,\cdots,N$ as follows:
\begin{enumerate}
\item[1. ]
pick $\rho$, $\sigma$, $\eta_{i}$, $\gamma_{i}$, $\zeta_{i}$, $\Gamma$ and $B_{i}$ heuristically;
\item[2. ]
set $D_{i}$ to make $\overline{D}_{i}\geq 0$ in~\eqref{equ22};
\item[3. ]
Select $Q$ by condition~\eqref{equ4};
\item[4. ]
choose $P$ sufficiently large such that
\begin{equation}\label{equ23}
P\geq \frac{\rho\cdot(r^{2}+Q)}{4\lambda_{L}}\cdot\max\limits_{i=1,\cdots,N}\left(\frac{B_{i}+\sigma D_{i}}{\sigma B_{i}}\right)\textrm{,}
\end{equation}
and that~\eq{equ4} is guaranteed with
\begin{align*}
\Delta=\frac{1}{2}\cdot\sum^{N}_{i=1}\frac{\lambda_{i2}}{B_{i}+\sigma D_{i}}\cdot\|\fbm{s}_{i}(0)\|^{2}+\frac{\overline{f}^{2}}{4\rho\Gamma}\textrm{;}
\end{align*}
\item[5. ]
update $K_{i}(t)$ according to~\eqref{equ22} to ensure that
\begin{equation}\label{equ24}
\overline{K}_{i}(t)\geq \frac{1}{2}\cdot\rho\cdot\lambda_{i2}\textrm{.}
\end{equation}
\end{enumerate}
\end{theorem}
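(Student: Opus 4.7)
The plan is to establish connectivity preservation by a time-induction / set-invariance argument anchored on \prop{prop2}. Specifically, I would define $T^{\ast}=\sup\{t\geq 0:\|\fbm{x}_{ij}(\tau)\|<r\text{ for all }(i,j)\in\mathcal{E}(0),\ \tau\in[0,t]\}$ and show $T^{\ast}=\infty$ by contradiction. On every subinterval $[0,t]\subset[0,T^{\ast})$ the edge set is frozen at $\mathcal{E}(0)$ so the controller~\eqref{equ13} is well-defined and uses only information from surviving neighbours, which makes the expansion~\eqref{equ15}--\eqref{equ21} of $\dot V$ along the closed loop valid without ambiguity about the active graph.

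The core of the proof is to reduce~\eqref{equ21} to a standard exponential-ISS inequality $\dot V\leq -\rho V+\|\fbm{f}\|^{2}/(4\Gamma)$. Step~2 of the theorem makes $\overline D_{i}\geq 0$, which discards the $\dbm{x}_{i}$ terms. Step~5 enforces $\overline K_{i}(t)\geq \tfrac{1}{2}\rho\lambda_{i2}$, which, combined with Property~\ref{P1}, bounds the kinetic sum $\sum_{i}\overline{K}_{i}(t)\tbm{s}_{i}\fbm{s}_{i}/(B_{i}+\sigma D_{i})$ below by $\rho$ times the kinetic part of $V$. For the potential part, I would apply the elementary inequality
\begin{align*}
\sum_{i=1}^{N}\frac{\sigma B_{i}}{B_{i}+\sigma D_{i}}\tfbm{\theta}_{i}\bm{\theta}_{i}\geq \Bigl(\min_{i}\frac{\sigma B_{i}}{B_{i}+\sigma D_{i}}\Bigr)\sum_{i=1}^{N}\tfbm{\theta}_{i}\bm{\theta}_{i},
\end{align*}
and then invoke \lem{lem1} on the rightmost sum. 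Rewriting~\eqref{equ23} as $\min_{i}\sigma B_{i}/(B_{i}+\sigma D_{i})\cdot 4\lambda_{L}P/(r^{2}+Q)\geq \rho$ yields that this contribution dominates $\rho V_{p}$, completing the desired inequality.

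With $\dot V\leq -\rho V+\overline{f}^{2}/(4\Gamma)$ on $[0,T^{\ast})$ by~\ass{ass2}, the comparison lemma delivers $V(t)\leq V(0)+\overline{f}^{2}/(4\rho\Gamma)$; Property~\ref{P1} then bounds $V(0)\leq \tfrac{1}{2}\sum_{i}\lambda_{i2}/(B_{i}+\sigma D_{i})\|\fbm{s}_{i}(0)\|^{2}+V_{p}(0)$, and the definition of $\Delta$ in step~4 gives $V_{p}(t)\leq V(t)\leq V_{p}(0)+\Delta$ for every $t\in[0,T^{\ast})$. Hence \prop{prop2}, whose hypothesis~\eqref{equ5} is now verified, furnishes the strict bound $\|\fbm{x}_{ij}(t)\|<r$ for every $(i,j)\in\mathcal{E}(0)$ and $t\in[0,T^{\ast})$; by continuity of $\fbm{x}_{ij}$ this strict inequality persists at $t=T^{\ast}$ whenever $T^{\ast}<\infty$, contradicting maximality. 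Therefore $T^{\ast}=\infty$ and $\mathcal{E}(0)$ is rendered positively invariant, which is exactly the connectivity claim.

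The main obstacle is the heterogeneous weighting appearing in~\eqref{equ14} and~\eqref{equ21}: the potential-dissipation term $\sigma B_{i}\tfbm{\theta}_{i}\bm{\theta}_{i}/(B_{i}+\sigma D_{i})$ carries an $i$-dependent coefficient, so \lem{lem1}---a global bound on $\sum_{i}\tfbm{\theta}_{i}\bm{\theta}_{i}$---cannot be invoked term by term. The factor $\max_{i}(B_{i}+\sigma D_{i})/(\sigma B_{i})$ in~\eqref{equ23} is precisely the compensation for this worst-case weight mismatch, and recognizing that this is what ties the tree-structural bound in \lem{lem1} to the per-agent Lyapunov design is the non-routine step; once~\eqref{equ23} and~\eqref{equ24} are in force the remainder follows the standard set-invariance template.
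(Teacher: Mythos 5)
Your proposal is correct and follows essentially the same route as the paper: substitute \lem{lem1} into~\eqref{equ21} with the worst-case weight $\min_{i}\sigma B_{i}/(B_{i}+\sigma D_{i})$ absorbed by~\eqref{equ23}, use~\eqref{equ24} with Property~\ref{P1} to dominate the kinetic part, integrate $\dot V\leq-\rho V+\|\fbm{f}\|^{2}/(4\Gamma)$, and close via \prop{prop2} with the stated $\Delta$. Your explicit $T^{\ast}$-based continuity/induction argument is a slightly more rigorous packaging of the time-induction step that the paper handles informally, but it is not a different proof.
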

\begin{proof}
Substitution of~\eqref{equ11} in~\eqref{equ21} leads to
\begin{equation}\label{equ25}
\begin{aligned}
\dot{V}\leq &-\sum^{N}_{i=1}\frac{\overline{K}_{i}(t)}{B_{i}+\sigma D_{i}}\cdot\tbm{s}_{i}\fbm{s}_{i}-\sum^{N}_{i=1}\frac{\overline{D}_{i}}{B_{i}+\sigma D_{i}}\cdot\dtbm{x}_{i}\dbm{x}_{i}\\
&-\min\limits_{i=1,\cdots,N}\left(\frac{\sigma B_{i}}{B_{i}+\sigma D_{i}}\right)\cdot\frac{4\lambda_{L}P}{r^{2}+Q}\cdot V_{p}+\frac{\|\fbm{f}\|^{2}}{4\Gamma}\\
\leq &-\frac{1}{2}\cdot\sum^{N}_{i=1}\frac{\rho\cdot\lambda_{i2}}{B_{i}+\sigma D_{i}}\cdot\tbm{s}_{i}\fbm{s}_{i}-\rho\cdot V_{p}+\frac{\|\fbm{f}\|^{2}}{4\Gamma}\\
\leq &-\rho\cdot V+\rho\cdot\chi\left(\|\fbm{f}\|\right)\textrm{,}
\end{aligned}
\end{equation}
where $\overline{D}_{i}\geq 0$, \eqs{equ23}{equ24} have been applied, and
\begin{align*}
\chi(\|\fbm{f}\|)=\frac{\|\fbm{f}\|^{2}}{4\rho\Gamma}\textrm{.}
\end{align*}
Time integration of $\dot{V}$ from $0$ to $t\geq 0$ gives that:
\begin{equation}\label{equ26}
\begin{aligned}
V(t)\leq & e^{-\rho t}\cdot V(0)+\rho\cdot\int^{t}_{0}e^{-\rho(t-\tau)}\cdot\chi(\|\fbm{f}(\tau)\|)d\tau\\
\leq & e^{-\rho t}\cdot V(0)+\rho\cdot\sup\limits_{0\leq\tau\leq t}\chi(\|\fbm{f}(\tau)\|)\cdot\int^{t}_{0}e^{-\rho(t-\tau)}d\tau\\
\leq &e^{-\rho t}\cdot V(0)+\sup\limits_{0\leq\tau\leq t}\chi(\|\fbm{f}(\tau)\|)\textrm{.}
\end{aligned}
\end{equation}
Then, at any time instant $t\geq 0$, \ass{ass2} as well as \eq{equ26} lead to
\begin{align*}
V_{p}(t)\leq V(t)\leq e^{-\rho t}\cdot V_{p}(0)+\Delta\textrm{.}
\end{align*}
With $Q$ and $P$ selected by~\eqref{equ4}, it implies by~\prop{prop2} that $\|\fbm{x}_{ij}(t)\|<r$ for every $(i,j)\in\mathcal{E}(0)$, and with them, the connectivity of the teleoperated slave swarm is preserved.
\end{proof}

The first two objectives of~\prob{prob1} can be proven by showing input-to-state stability of the slave swarm. Define the state of the slave swarm subsystem to be $\bm{\phi}=[\dtbm{x}, \ \ttilbm{x}]^\mathsf{T}$ in which 
\begin{align*}
\dbm{x}=&[\dtbm{x}_{1},\ \cdots,\ \dtbm{x}_{N}]^\mathsf{T}\in\mathbb{R}^{Nn}\textrm{,}\\
\tilbm{x}=&\left(\tbm{D}\otimes\fbm{I}_{n}\right)\fbm{x}\in\mathbb{R}^{(N-1)n}\textrm{,}
\end{align*}
stack the velocity $\dbm{x}_{i}$ of each slave $i$, and the position error $\fbm{x}_{ij}=\fbm{x}_{i}-\fbm{x}_{j}$ between every pair of adjacent slave robots $(i,j)\in\mathcal{E}(0)$, respectively. In~\cite{Lee2010TRO}, the same definition of the state of interactive robotic systems have been proposed for investigating system stability.

\begin{corollary}
Under Assumptions~\ref{ass1} and~\ref{ass2}, if parameters $\rho$, $\sigma$, $\eta_{i}$, $\gamma_{i}$, $\zeta_{i}$, $\Gamma$, $P$ and $Q$, and gains $K_{i}(t)$, $B_{i}$ and $D_{i}$ are selected as in~\theo{theorem1}, then the slave swarm~\eqref{equ1} in closed-loop with the proposed control~\eqref{equ13} is exponentially ISS with input $\fbm{f}$ and state $\bm{\phi}$.
\end{corollary}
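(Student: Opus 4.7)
The plan is to lift the Lyapunov dissipation inequality already established in the proof of \theo{theorem1}---namely $\dot{V}\leq -\rho V+\rho\chi(\|\fbm{f}\|)$ with $\chi(\|\fbm{f}\|)=\|\fbm{f}\|^{2}/(4\rho\Gamma)$---into the exponential ISS estimate of \defi{iss} for the composite state $\bm{\phi}$. First I would integrate this inequality and invoke \ass{ass2} to obtain the a priori uniform bound $V(t)\leq V(0)+\overline{f}^{2}/(4\rho\Gamma)$. Because $V_{p}\leq V$ and $\psi$ is continuous, positive definite and strictly increasing on $[0,r)$ with $\psi(r^{-})=+\infty$, this bound forces every inter-slave distance to stay in a closed disk $\|\fbm{x}_{ij}(t)\|\leq r-\epsilon^{\prime}<r$, where $\epsilon^{\prime}>0$ depends only on $V(0)$ and $\overline{f}$.

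Next, I would convert the state-dependent coefficients appearing in $\nabla_{i}\psi$, $\bm{\theta}_{i}$ and $\psi$ itself into honest constants by exploiting the uniform slack $\epsilon^{\prime}$: in particular, $\|\bm{\theta}_{i}\|\leq M\sqrt{|\mathcal{N}_{i}(0)|}\cdot\|\tilbm{x}\|$ and $\psi(\|\fbm{x}_{ij}\|)\leq M^{\prime}\|\fbm{x}_{ij}\|^{2}$ for some $M,M^{\prime}>0$. Combining these with the elementary inequalities $\|\dbm{x}_{i}\|^{2}\leq 2\|\fbm{s}_{i}\|^{2}+2\sigma^{2}\|\bm{\theta}_{i}\|^{2}$ and its reverse, property~\ref{P1} applied to the kinetic term of $V$, and the trivial lower bound $\psi(\|\fbm{x}_{ij}\|)\geq P\|\fbm{x}_{ij}\|^{2}/(r^{2}+Q)$, I would sandwich $V$ between two quadratic forms $\alpha_{1}\|\bm{\phi}\|^{2}\leq V\leq \alpha_{2}\|\bm{\phi}\|^{2}$ along the closed-loop trajectory. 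Substituting this sandwich into~\eqref{equ26} and taking square roots would then yield
\begin{align*}
\|\bm{\phi}(t)\|\leq\sqrt{\frac{\alpha_{2}}{\alpha_{1}}}\cdot e^{-\rho t/2}\cdot\|\bm{\phi}(0)\|+\frac{1}{2\sqrt{\rho\Gamma\alpha_{1}}}\cdot\sup\limits_{0\leq\tau\leq t}\|\fbm{f}(\tau)\|\textrm{,}
\end{align*}
which matches \defi{iss} with an exponentially decaying $\beta\in\mathcal{KL}$ and a linear $\alpha\in\mathcal{K}$, thereby establishing exponential ISS.

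The main obstacle I anticipate is the singularity of the potential~\eqref{equ2} at $\|\fbm{x}_{ij}\|=r$, which prevents the upper bound $V\leq \alpha_{2}\|\bm{\phi}\|^{2}$ from holding globally: the constant $\alpha_{2}$ is unavoidably a function of the slack $\epsilon^{\prime}$, and hence of $V(0)$ and $\overline{f}$. The argument must therefore appeal to \theo{theorem1} at the outset to certify that the trajectory remains inside the positively invariant set on which this slack is uniform, so that the two-sided bound---and with it the ISS estimate---is legitimate on the class of initial conditions admitted by \ass{ass1}.
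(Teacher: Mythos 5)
Your overall route is the paper's route: establish $V(t)\leq e^{-\rho t}V(0)+\sup_{0\leq\tau\leq t}\chi(\|\fbm{f}(\tau)\|)$ from the dissipation inequality of \theo{theorem1}, sandwich $V$ between two quadratic forms in $\|\bm{\phi}\|$ using Property~\ref{P1}, the triangle inequalities relating $\fbm{s}_{i}$, $\dbm{x}_{i}$ and $\bm{\theta}_{i}$, and the two-sided bounds $\frac{P}{r^{2}+Q}\|\fbm{x}_{ij}\|^{2}\leq\psi(\|\fbm{x}_{ij}\|)\leq\frac{P}{Q}\|\fbm{x}_{ij}\|^{2}$, then substitute into~\eqref{equ26} and take square roots. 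The paper additionally uses the largest eigenvalue $\overline{\lambda}_{L}$ of the edge Laplacian to relate $\sum_{i}\tfbm{\theta}_{i}\bm{\theta}_{i}$ to $V_{p}$ (the counterpart of~\lem{lem1}), which is the concrete form of your constant $M$; your final estimate matches~\eqref{equ35} up to renaming $\kappa_{1},\kappa_{2}$ as $1/\alpha_{1},\alpha_{2}$.

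However, your justification of the key confinement step rests on a false premise. The potential~\eqref{equ2} is \emph{not} a barrier function: because of the $+Q$ in the denominator, $\psi(r)=Pr^{2}/Q=\psi_{\max}<\infty$, and the actual singularity sits at $\|\fbm{x}_{ij}\|=\sqrt{r^{2}+Q}>r$. So the uniform bound on $V$ does not by itself ``force'' $\|\fbm{x}_{ij}\|\leq r-\epsilon^{\prime}$; that is precisely why the paper needs the set-invariance machinery of Propositions~\ref{prop1} and~\ref{prop2}, i.e., the specific choice of $P$, $Q$ and $\Delta$ guaranteeing $V_{p}(0)+\Delta<\psi_{\max}$, together with the induction-on-time argument in \theo{theorem1}. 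Your closing paragraph correctly notes that \theo{theorem1} must be invoked at the outset, which rescues the argument, but the mechanism you describe in the first paragraph (blow-up of $\psi$ at $r^{-}$) is not available here. Relatedly, your concern that $\alpha_{2}$ ``unavoidably'' depends on the slack $\epsilon^{\prime}$ is unfounded: once \theo{theorem1} gives $\|\fbm{x}_{ij}(t)\|<r$ for all $(i,j)\in\mathcal{E}(0)$, the denominator satisfies $r^{2}-\|\fbm{x}_{ij}\|^{2}+Q>Q$, so $V_{p}\leq\frac{P}{Q}\ttilbm{x}\tilbm{x}$ and hence $V\leq\kappa_{2}\|\bm{\phi}\|^{2}$ hold with constants depending only on $P$, $Q$, $\sigma$, $\overline{\lambda}_{L}$ and $\lambda_{i2}$, uniformly over the invariant set and independently of $V(0)$ and $\overline{f}$. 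With the barrier claim removed and the confinement attributed to \theo{theorem1}, your proof is sound and coincides with the paper's.
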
   
\begin{proof}
Let all parameters and control gains be choosen as in~\theo{theorem1}. Under Assumptions~\ref{ass1} and~\ref{ass2}, the proposed Lyapunov candidate $V$ can then be upper-bounded by~\eqref{equ26}. 

Let $\overline{\lambda}_{L}>0$ be the maximum eigenvalue of the edge laplacian $\fbm{L}_{e}$. It follows from~\eqref{equ14} then that
\begin{equation}\label{equ27}
\begin{aligned}
&\sum^{N}_{i=1}\tfbm{\theta}_{i}\bm{\theta}_{i}\leq \overline{\lambda}_{L}\cdot\tbbm{x}\bbm{x}=\frac{\overline{\lambda}_{L}}{2}\cdot\sum^{N}_{i=1}\sum_{j\in\mathcal{N}_{i}(0)}\|\nabla_{i}\psi(\|\fbm{x}_{ij}\|)\|^{2}\\
=&\sum^{N}_{i=1}\sum_{j\in\mathcal{N}_{i}(0)}\frac{2\overline{\lambda}_{L}P\cdot(r^{2}+Q)^{2}}{\left(r^{2}-\|\fbm{x}_{ij}\|^{2}+Q\right)^{3}}\cdot\psi(\|\fbm{x}_{ij}\|)\leq\frac{4\overline{\lambda}_{L}P}{r^{2}+Q}\cdot V_{p}\textrm{.}
\end{aligned}
\end{equation} 
The definition of $\fbm{s}_{i}$ in~\eqref{equ6} together with~\eqref{equ27} imply that
\begin{equation}\label{equ28}
\begin{aligned}
\sum^{N}_{i=1}\dtbm{x}_{i}\dbm{x}_{i}\leq &2\cdot\sum^{N}_{i=1}\left(\tbm{s}_{i}\fbm{s}_{i}+\sigma^{2}\cdot\tfbm{\theta}_{i}\bm{\theta}_{i}\right)\\
\leq &\sum^{N}_{i=1}2\tbm{s}_{i}\fbm{s}_{i}+\frac{8\sigma^{2}\overline{\lambda}_{L}P}{r^{2}+Q}\cdot V_{p}\textrm{.}
\end{aligned}
\end{equation}
Using~\eqref{equ28} and~\ref{P1} of~\eqref{equ1}, $V$ can be lower-bounded by
\begin{equation}\label{equ29}
\begin{aligned}
V\geq &\frac{1}{2}\cdot\min\limits_{i=1,\cdots,N}\left(\frac{\lambda_{i1}}{B_{i}+\sigma D_{i}}\right)\cdot\sum^{N}_{i=1}\tbm{s}_{i}\fbm{s}_{i}+V_{p}\\
\geq &\min\left[\frac{1}{4}\cdot\min\limits_{i=1,\cdots,N}\left(\frac{\lambda_{i1}}{B_{i}+\sigma D_{i}}\right), \frac{r^{2}+Q}{8\sigma^{2}\overline{\lambda}_{L}P}\right]\cdot\sum^{N}_{i=1}\dtbm{x}_{i}\dbm{x}_{i}\textrm{,}
\end{aligned}
\end{equation}
and also by
\begin{equation}\label{equ30}
\begin{aligned}
V\geq &V_{p}=\frac{1}{2}\cdot\sum^{N}_{i=1}\sum_{j\in\mathcal{N}_{i}(0)}\frac{P\cdot\|\fbm{x}_{ij}\|^{2}}{r^{2}-\|\fbm{x}_{ij}\|^{2}+Q}\\
\geq &\frac{P}{r^{2}+Q}\cdot\frac{1}{2}\cdot\sum^{N}_{i=1}\sum_{j\in\mathcal{N}_{i}(0)}\|\fbm{x}_{ij}\|^{2}=\frac{P}{r^{2}+Q}\cdot\ttilbm{x}\tilbm{x}\textrm{.}
\end{aligned}
\end{equation}
Therefore, the state $\bm{\phi}$ can be quantified by $V$ as follows:
\begin{equation}\label{equ31}
\|\bm{\phi}\|^{2}=\sum^{N}_{i=1}\dtbm{x}_{i}\dbm{x}_{i}+\ttilbm{x}\tilbm{x}\leq \kappa_{1}\cdot V\textrm{,}
\end{equation}
where \eqs{equ29}{equ30} have been applied, and
\begin{align*}
\kappa_{1}=\max\left[\max\limits_{i=1,\cdots,N}\left(\frac{4(B_{i}+\sigma D_{i})}{\lambda_{i1}}\right),\frac{8\sigma^{2}\overline{\lambda}_{L}P}{r^{2}+Q}\right]+\frac{r^{2}+Q}{P}\textrm{.}
\end{align*}

Similarly, the definition of $\fbm{s}_{i}$ in~\eqref{equ6} implies also that
\begin{equation}\label{equ32}
\begin{aligned}
\sum^{N}_{i=1}\tbm{s}_{i}\fbm{s}_{i}\leq 2\cdot\sum^{N}_{i=1}\left(\dtbm{x}_{i}\dbm{x}_{i}+\sigma^{2}\cdot\tfbm{\theta}_{i}\bm{\theta}_{i}\right)\textrm{.}
\end{aligned}
\end{equation}
Given that $\mathcal{E}(t)$ is invariant, $\|\fbm{x}_{ij}\|<r$ for any $(i,j)\in\mathcal{E}(0)$ and any $t\geq 0$, it follows then that
\begin{equation}\label{equ33}
\begin{aligned}
V_{p}=&\frac{1}{2}\cdot\sum^{N}_{i=1}\sum_{j\in\mathcal{N}_{i}(0)}\frac{P\cdot\|\fbm{x}_{ij}\|^{2}}{r^{2}-\|\fbm{x}_{ij}\|^{2}+Q}\\
\leq &\frac{P}{2Q}\cdot\sum^{N}_{i=1}\sum_{j\in\mathcal{N}_{i}(0)}\|\fbm{x}_{ij}\|^{2}=\frac{P}{Q}\cdot\ttilbm{x}\tilbm{x}\textrm{.}
\end{aligned}
\end{equation}
Equations~\eqref{equ27}, \eqref{equ32} and~\eqref{equ33} together upper-bound $V$ by
\begin{equation}\label{equ34}
\begin{aligned}
V\leq &\frac{1}{2}\sum^{N}_{i=1}\lambda_{i2}\cdot\tbm{s}_{i}\fbm{s}_{i}+V_{p}\\
\leq &\lambda_{2}\cdot\sum^{N}_{i=1}\dtbm{x}_{i}\dbm{x}_{i}+\sigma^{2}\lambda_{2}\cdot\sum^{N}_{i=1}\tfbm{\theta}_{i}\bm{\theta}_{i}+V_{p}\\
\leq &\lambda_{2}\cdot\sum^{N}_{i=1}\dtbm{x}_{i}\dbm{x}_{i}+\left(\frac{4\sigma^{2}\lambda_{2}\overline{\lambda}_{L}P}{r^{2}+Q}+1\right)\cdot V_{p}\\
\leq &\lambda_{2}\cdot\sum^{N}_{i=1}\dtbm{x}_{i}\dbm{x}_{i}+\left[\frac{4\sigma^{2}\lambda_{2}\overline{\lambda}_{L}P^{2}}{(r^{2}+Q)Q}+\frac{P}{Q}\right]\cdot\ttilbm{x}\tilbm{x}\leq \kappa_{2}\cdot\|\bm{\phi}\|^{2}
\end{aligned}
\end{equation}
with $\lambda_{2}=\max\limits_{i=1,\cdots,N}\left(\lambda_{i2}\right)$ and
\begin{align*}
\kappa_{2}=\max\left[\lambda_{2},\frac{4\sigma^{2}\lambda_{2}\overline{\lambda}_{L}P^{2}}{(r^{2}+Q)Q}+\frac{P}{Q}\right]\textrm{.}
\end{align*}
Define $\alpha(\cdot)\in\mathcal{K}_{\infty}$ and $\beta(\cdot,\cdot)\in\mathcal{KL}$ by
\begin{align*}
\alpha\left(\sup\limits_{0\leq\tau\leq t} \|\fbm{f}(\tau)\|\right)=&\sqrt{\frac{\kappa_{1}}{4\rho\Gamma}}\cdot\sup\limits_{0\leq\tau\leq t}\|\fbm{f}(\tau)\|\textrm{,}\\
\beta\left(\|\bm{\phi}(0)\|,t\right)=&\sqrt{\frac{\kappa_{1}\kappa_{2}}{e^{\rho t}}}\cdot\|\bm{\phi}(0)\|\textrm{.}
\end{align*}
Then, substitutions of~\eqref{equ31} and~\eqref{equ34} in~\eqref{equ26} lead to:
\begin{equation}\label{equ35}
\begin{aligned}
&\|\bm{\phi}(t)\|\leq \sqrt{\kappa_{1}\cdot V(t)}\\
\leq&\sqrt{\kappa_{1}\cdot e^{-\rho t}\cdot V(0)+\kappa_{1}\cdot\sup\limits_{0\leq\tau\leq t}\chi(\|\fbm{f}(\tau)\|)}\\
\leq &\sqrt{\kappa_{1}\cdot e^{-\rho t}\cdot V(0)}+\sqrt{\kappa_{1}\cdot\sup\limits_{0\leq\tau\leq t}\chi(\|\fbm{f}(\tau)\|)}\\
\leq &\sqrt{\kappa_{1}\kappa_{2}\cdot e^{-\rho t}\cdot\|\bm{\phi}(0)\|^{2}}+\alpha\left(\sup\limits_{0\leq\tau\leq t} \|\fbm{f}(\tau)\|\right)\\
=&\beta\left(\|\bm{\phi}(0)\|,t\right)+\alpha\left(\sup\limits_{0\leq\tau\leq t} \|\fbm{f}(\tau)\|\right)\textrm{.}
\end{aligned}
\end{equation}
Therefore, the slave swarm~\eqref{equ1} under~\eqref{equ13} is exponentially ISS with input $\fbm{f}$ and state $\bm{\phi}$ by~\defi{iss}.
\end{proof}

\lem{lem1} and \theo{theorem1} are the most significant contributions of this paper. The inequality~\eqref{equ11} holding for the tree network $\mathcal{G}(0)$ contributes to further bouding $\dot{V}$ in~\eqref{equ21} by~\eqref{equ25}. Then, time integration~\eqref{equ26} together with~\prop{prop2} indicate that the distance between every pair of slave robots $(i,j)\in\mathcal{E}(0)$ can be constrained to be strictly smaller than the communication radius $r$. Compared to fully autonomous MRS-s and leader-follower systems, the main challenge of preserving the connectivity of a teleoperated swarm is caused by the unpredictable user perturbation $\fbm{f}$ transmitted from the master side. Different from external disturbances, like wind forces, the user perturbation $\fbm{f}$ commands the motion of the slave swarm by operating a master device. Hence, it should not be entirely rejected for the physical human-robot interaction. However, $\fbm{f}$ may endanger the connectivity of the slave swarm, for example by moving the informed slave such that some slave robots cannot keep up with it. The above analysis proves the interesting fact that, when properly designed, the distributed control~\eqref{equ13} eliminates the threat posed by the user command to connectivity. Condition~\eqref{equ24} on $K_{i}(t)$ exposes the uniqueness of the proposed design, especially in the P+d form of the control as in~\rem{rem1}. Namely, the design strengthens the couplings between slave robots and simultaneously increases the local damping injection in the swarm network based on their relative distances. To the authors' best knowledge, the control~\eqref{equ13} is the first strategy to maintain the connectivity of a teleoperated swarm with a state-dependent updating law of the coupling and damping gains.

\section{Conclusions}

This paper has presented a dynamic coupling and damping injection law for connectivity-preserving swarm teleoperation with a tree network. Using a customized potential function, this paper has firstly illustrated the principle of proving connectivity maintenance by set invariance. After reducing the order of the system dynamics by sliding surfaces, the mismatches induced by system dynamics transformations has been quantified by inter-robot distances. Then, the dynamic coupling and damping injection strategy has been designed to suppress the impact of the mismatches on connectivity maintenance. Rigorous energy analysis further forms the main contributions of this paper: the conclusion that all links of the tree network, and thus the connectivity of the teleoperated slave swarm, can be preserved by dynamic regulation of the inter-robot couplings and of the local damping injections. Future research will consider connectivity-preserving swarm teleoperation with limited actuation and heterogeneous communication radius.

\bibliography{IEEEabrv,Cooperation}
\end{document}